\newtheorem{theorem}{Theorem}
\newtheorem{definition}[theorem]{Definition}
\newtheorem{remark}[theorem]{Remark}
\newtheorem{example}[theorem]{Example}
\title{Orthomodular lattices can be converted into left residuated l-groupoids\thanks{Preprint of an article published by Miskolc University Press in Miskolc Mathematical Notes 18 (2017), No.\ 2, pp.\ 685--689. DOI: 10.18514/MMN.2017.1730. It is available online at: \texttt{mat76.mat.uni-miskolc.hu/mnotes}.}}
\author{Ivan Chajda and Helmut L\"anger}
\date{}
\begin{document}
\footnotetext[1]{Support of the research of both authors by the bilateral project entitled ``New perspectives on residuated posets'', supported by the Austrian Science Fund (FWF), project I~1923-N25, and the Czech Science Foundation (GA\v CR), project 15-34697L, as well as by \"OAD, project 04/2017, is gratefully acknowledged.}
\maketitle
\begin{abstract}
We show that every orthomodular lattice can be considered as a left residuated l-groupoid satisfying divisibility, antitony, the double negation law and three more additional conditions expressed in the language of residuated structures. Also conversely, every left residuated l-groupoid satisfying the mentioned conditions can be organized into an orthomodular lattice.
\end{abstract}
 
{\bf AMS Subject Classification:} 06C15, 06A11

{\bf Keywords:} orthomodular lattice, left residuated l-groupoid, divisibility, antitony, double negation law

It is well-known that residuated structures form an algebraic axiomatization of fuzzy logics, see e.~g.\ \cite{Bel} for an overview. The reader can find necessary concepts and definitions concerning residuated structures in \cite{GJKO}, however this paper is self-contained. Orthomodular lattices were introduced by G.~Birkhoff and J.~von~Neumann as an algebraic axiomatization of the logic of quantum mechanics, see e.~g.\ \cite{DP}, \cite{K} or \cite{Ber} for details. Hence it is a natural question if these two concepts have a common base, i.~e.\ if orthomodular lattices can be considered as residuated structures and hence as an axiomatization of certain fuzzy logic and, conversely, if certain residuated structures can be converted into orthomodular lattices, i.~e.\ if the logic of quantum mechanics can be considered as a kind of fuzzy logic. For the theory of orthomodular lattices cf.\ the monographs \cite{K} and \cite{Ber} as well as the paper \cite{CHK}.

We start with the definition of an orthomodular lattice.

\begin{definition}\label{def1}
An {\em orthomodular lattice} is an algebra ${\mathcal L}=(L,\vee,\wedge,',0,1)$ of type $(2,2,1,$ $0,0)$ satisfying {\rm(i)} -- {\rm(v)} for all $x,y\in L$:
\begin{enumerate}
\item[{\rm(i)}] $(L,\vee,\wedge,0,1)$ is a bounded lattice.
\item[{\rm(ii)}] $x\vee x'=1$
\item[{\rm(iii)}] $x\leq y$ implies $y'\leq x'$.
\item[{\rm(iv)}] $(x')'=x$
\item[{\rm(v)}] $x\leq y$ implies $y=x\vee(y\wedge x')$.
\end{enumerate}
\end{definition}

\begin{remark}\label{rem2}
In every lattice $(L,\vee,\wedge)$ with a unary operation $'$ satisfying {\rm(iii)} and {\rm(iv)} the so-called de Morgan laws
\[
(x\vee y)'=x'\wedge y'\mbox{ and }(x\wedge y)'=x'\vee y'
\]
hold.
\end{remark}

\begin{remark}\label{rem3}
According to the de Morgan laws condition {\rm(v)} can be replaced by
\begin{enumerate}
\item[{\rm(vi)}] $x\leq y$ implies $x=y\wedge(x\vee y')$.
\end{enumerate}
\end{remark} 

Now we introduce left residuated l-groupoids.

\begin{definition}\label{def2}
A {\em left residuated l-groupoid} is an algebra ${\mathcal A}=(A,\vee,\wedge,\odot,\rightarrow,0,1)$ of type $(2,2,2,2,0,0)$ satisfying {\rm(i)} -- {\rm(iii)} for all $x,y,z\in A$:
\begin{enumerate}
\item[{\rm(i)}] $(A,\vee,\wedge,0,1)$ is a bounded lattice.
\item[{\rm(ii)}] $x\odot1=1\odot x=x$.
\item[{\rm(iii)}] $x\odot y\leq z$ if and only if $x\leq y\rightarrow z$.
\end{enumerate}
Condition {\rm(iii)} is called {\em left adjointness}. ${\mathcal A}$ is said to satisfy {\em divisibility} if
\[
(x\rightarrow y)\odot x=x\wedge y
\]
for all $x,y\in A$. We define a unary operation $'$ on $A$ by
\[
x':=x\rightarrow0
\]
for all $x\in A$. ${\mathcal A}$ is said to satisfy {\em antitony} if
\[
x\leq y\mbox{ implies }y'\leq x'
\]
for all $x,y\in A$ and ${\mathcal A}$ is said to satisfy the {\em double negation law} if
\[
(x')'=x
\]
for all $x\in A$.
\end{definition}

\begin{example}\label{ex1}
If $A:=\{0,a,a',b,b',1\}$, $(A,\vee,\wedge,0,1)$ denotes the bounded lattice with the Hasse diagram
\begin{center}
\setlength{\unitlength}{5mm}
\begin{picture}(6,5)
\put(4,1){\circle*{.3}}
\put(1,3){\circle*{.3}}
\put(3,3){\circle*{.3}}
\put(5,3){\circle*{.3}}
\put(7,3){\circle*{.3}}
\put(4,5){\circle*{.3}}
\put(4,1){\line(-3,2){3}}
\put(4,1){\line(-1,2){1}}
\put(4,1){\line(1,2){1}}
\put(4,1){\line(3,2){3}}
\put(4,5){\line(-3,-2){3}}
\put(4,5){\line(-1,-2){1}}
\put(4,5){\line(1,-2){1}}
\put(4,5){\line(3,-2){3}}
\put(.3,2.8){$a$}
\put(2.2,2.8){$a'$}
\put(5.3,2.8){$b$}
\put(7.3,2.8){$b'$}
\put(3.8,.1){$0$}
\put(3.8,5.4){$1$}
\end{picture}
\end{center}
and the binary operations $\odot$ and $\rightarrow$ are defined by the tables
\[
\begin{array}{c|cccccc}
\odot & 0 & a & a' & b & b' & 1 \\
\hline
0 & 0 & 0 & 0 & 0 & 0 & 0 \\
a & 0 & a & 0 & b & b' & a \\
a' & 0 & 0 & a' & b & b' & a' \\
b & 0 & a & a' & b & 0 & b \\
b' & 0 & a & a' & 0 & b' & b' \\
1 & 0 & a & a' & b & b' & 1
\end{array}
\quad\quad
\begin{array}{c|cccccc}
\rightarrow & 0 & a & a' & b & b' & 1 \\
\hline
0 & 1 & 1 & 1 & 1 & 1 & 1 \\
a & a' & 1 & a' & a' & a' & 1 \\
a' & a & a & 1 & a & a & 1 \\
b & b' & b' & b' & 1 & b' & 1 \\
b' & b & b & b & b & 1 & 1 \\
1 & 0 & a & a' & b & b' & 1
\end{array}
\] 
then $(A,\vee,\wedge,\odot,\rightarrow,0,1)$ is a left residuated l-groupoid satisfying divisibility, antitony and the double negation law. The mentioned lattice is the smallest orthomodular lattice which is not a Boolean algebra and it is usually denoted by {\rm MO2}.
\end{example}

The following theorem says that to every orthomodular lattice there can be assigned a left residuated l-groupoid in a natural way.

\begin{theorem}\label{th1}
Let ${\mathcal L}=(L,\vee,\wedge,',0,1)$ be an orthomodular lattice and define binary operations $\odot$ and $\rightarrow$ on $L$ by the following formulas:
\begin{equation}\label{equ1}
x\odot y=(x\vee y')\wedge y,
\end{equation}
\begin{equation}\label{equ2}
x\rightarrow y=(y\wedge x)\vee x'.
\end{equation}
Then ${\bf A}({\mathcal L})=(L,\vee,\wedge,\odot,\rightarrow,0,1)$ is a left residuated l-groupoid satisfying divisibility, antitony, the double negation law as well as the following identity:
\begin{equation}\label{equ3}
x\odot(x\vee y)=x.
\end{equation}
Moreover, $x'=x\rightarrow0$ for all $x\in L$.
\end{theorem}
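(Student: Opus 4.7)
My plan is to verify each of the listed properties by direct computation, leaning on the orthomodular identity~(v) and its dual~(vi) from Remark~\ref{rem3} whenever a nontrivial equality is needed. I would dispatch the easy items first. The fact that $x'=x\rightarrow 0$ is immediate from the definition: $x\rightarrow 0=(0\wedge x)\vee x'=x'$. Consequently, antitony and the double negation law for the operation $'$ defined by $x\rightarrow 0$ reduce to conditions~(iii) and~(iv) of Definition~\ref{def1}, so nothing further is needed for those. For condition~(ii) of Definition~\ref{def2}, $x\odot 1=(x\vee 0)\wedge 1=x$ and $1\odot x=(1\vee x')\wedge x=x$.

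The main obstacle, as expected, is verifying left adjointness $x\odot y\leq z\iff x\leq y\rightarrow z$, i.e.\ $(x\vee y')\wedge y\leq z\iff x\leq(z\wedge y)\vee y'$. My plan is to exploit the orthomodular identity in both of its forms. For the forward implication, since $y'\leq x\vee y'$, property~(v) applied to $y'\leq x\vee y'$ yields
\[
x\vee y'=y'\vee\bigl((x\vee y')\wedge y\bigr);
\]
assuming $(x\vee y')\wedge y\leq z$ (and of course $\leq y$), the right-hand side is at most $y'\vee(z\wedge y)$, so $x\leq x\vee y'\leq(z\wedge y)\vee y'$. For the converse, assume $x\leq(z\wedge y)\vee y'$; then $x\vee y'\leq(z\wedge y)\vee y'$, so $(x\vee y')\wedge y\leq\bigl((z\wedge y)\vee y'\bigr)\wedge y$. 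Now~(vi) applied to $z\wedge y\leq y$ gives $z\wedge y=y\wedge\bigl((z\wedge y)\vee y'\bigr)$, which collapses the right-hand side to $z\wedge y\leq z$. This step is where the full strength of orthomodularity is genuinely used.

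The remaining items follow from a single application of~(vi) each. For divisibility, I would expand
\[
(x\rightarrow y)\odot x=\bigl((y\wedge x)\vee x'\vee x'\bigr)\wedge x=\bigl((y\wedge x)\vee x'\bigr)\wedge x,
\]
and since $y\wedge x\leq x$, property~(vi) gives exactly $y\wedge x$. For identity~\eqref{equ3}, set $a:=x\vee y$; then $x\leq a$, so~(vi) applied to this inequality yields $x=a\wedge(x\vee a')=(x\vee y)\wedge\bigl(x\vee(x\vee y)'\bigr)=x\odot(x\vee y)$. Assembling these pieces in the order: definitions of $'$ and trivial axioms, adjointness, divisibility, identity~\eqref{equ3}, completes the proof, with the adjointness equivalence being the only step requiring an argument of more than one line.
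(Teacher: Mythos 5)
Your proposal is correct and follows essentially the same route as the paper's proof: the same direct computations for $x\rightarrow 0$, the unit laws, antitony and double negation, the same use of (v) applied to $y'\leq x\vee y'$ for one direction of adjointness, and the same use of (vi) applied to $z\wedge y\leq y$ (respectively $y\wedge x\leq x$ and $x\leq x\vee y$) for the converse, divisibility and identity~(\ref{equ3}). No gaps.
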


\begin{proof}
Let $a,b\in L$. We have
\[
a\rightarrow0=(0\wedge a)\vee a'=0\vee a'=a'.
\]
Of course, $(L,\vee,\wedge,0,1)$ is a bounded lattice. Moreover,
\begin{eqnarray*}
& & a\odot1=(a\vee1')\wedge1=(a\vee0)\wedge1=a\wedge1=a\mbox{ and} \\
& & 1\odot a=(1\vee a')\wedge a=1\wedge a=a.
\end{eqnarray*}
If $a\odot b\leq c$ then $(a\vee b')\wedge b\leq c$ and hence
\[
a\leq a\vee b'=((a\vee b')\wedge b)\vee b'=(((a\vee b')\wedge b)\wedge b)\vee b'\leq(c\wedge b)\vee b'=b\rightarrow c.
\]
If, conversely, $a\leq b\rightarrow c$ then $a\leq(c\wedge b)\vee b'$ and hence
\[
a\odot b=(a\vee b')\wedge b\leq(((c\wedge b)\vee b')\vee b')\wedge b=((c\wedge b)\vee b')\wedge b=c\wedge b\leq c.
\]
Now, using orthomodularity (i.~e.\ (v) of Definition~\ref{def1}), we have
\[
(a\rightarrow b)\odot a=(((b\wedge a)\vee a')\vee a')\wedge a=((b\wedge a)\vee a')\wedge a=a\wedge b.
\]
In view of Definition~\ref{def1}, $a\leq b$ implies $b'\leq a'$ and we have $(a')'=a$. Finally, by applying (\ref{equ1}) and (vi) of Definition~\ref{def1} we obtain
\[
a\odot(a\vee b)=(a\vee(a\vee b)')\wedge(a\vee b)=a.
\]
\end{proof}

\begin{remark}\label{rem1}
The operation $x\odot y:=(x\vee y')\wedge y$ is called the {\em Sasaki projection} of $x$ onto $y$ {\rm(}cf.\ {\rm\cite{K}} and {\rm\cite{Ber})}.
\end{remark}

Conversely, certain left residuated l-groupoids give rise to an orthomodular lattice.

\begin{theorem}\label{th2}
Let ${\mathcal A}=(A,\vee,\wedge,\odot,\rightarrow,0,1)$ be a left residuated l-groupoid satisfying antitony, the double negation law as well as identities {\rm(\ref{equ1})} and {\rm(\ref{equ3})} of Theorem~\ref{th1}. Moreover, define $x':=x\rightarrow0$ for all $x\in A$. Then ${\bf L}({\mathcal A})=(A,\vee,\wedge,',0,1)$ is an orthomodular lattice.
\end{theorem}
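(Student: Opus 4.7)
The plan is to verify the five conditions of Definition~\ref{def1} for $(A,\vee,\wedge,',0,1)$. Conditions~(i), (iii), (iv) hold by hypothesis (the bounded lattice structure of a left residuated l-groupoid, antitony, and the double negation law, respectively). Only (ii) $x\vee x'=1$ and the orthomodular law (v) remain to be proved. Since antitony and double negation are both available, Remark~\ref{rem2} applies, giving both de~Morgan laws at our disposal; in particular, Remark~\ref{rem3} permits me to replace (v) by its equivalent form (vi).

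First I would observe that left adjointness alone forces $0\odot x=0$: we have $0\odot x\leq z$ iff $0\leq x\rightarrow z$, and the latter is automatic, so taking $z=0$ gives the claim. Expanding the left-hand side by identity~(\ref{equ1}) then yields $x\wedge x'=(0\vee x')\wedge x=0\odot x=0$. In parallel, identity~(\ref{equ1}) applied with $y=0$ gives $x\odot 0=0$, and left adjointness then forces $x\leq 0\rightarrow 0=0'$ for every $x$, so that $0'=1$ (and hence $1'=0$ by double negation). Applying ${}'$ to $x\wedge x'=0$ and invoking de~Morgan now produces $x\vee x'=1$, which is condition~(ii).

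For orthomodularity in the form (vi), suppose $x\leq y$. Then $x\vee y=y$, so identity~(\ref{equ3}) specializes to $x\odot y=x$, while identity~(\ref{equ1}) rewrites the same product as $(x\vee y')\wedge y$. Combining the two equations yields $y\wedge(x\vee y')=x$, which is precisely (vi); by Remark~\ref{rem3} this is equivalent to (v) and completes the verification. I do not anticipate a substantive obstacle. The one step that deserves a moment of attention is the extraction of $0\odot x=0$ purely from residuation, because this is what allows identity~(\ref{equ1}) to deliver $x\wedge x'=0$, and thereby, via de~Morgan, the orthocomplementation law $x\vee x'=1$.
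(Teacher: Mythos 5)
Your proof is correct; it verifies exactly the required axioms and uses only the stated hypotheses (in particular, you correctly avoid using divisibility, which Theorem~\ref{th2} does not assume). It differs from the paper's proof in how the work is distributed between (ii) and (v). For orthomodularity, the paper works with the complements: from $a\leq b$ it computes $b=(b')'=(b'\odot(b'\vee a'))'=(b'\odot a')'=((b'\vee a)\wedge a')'=a\vee(b\wedge a')$, using double negation, antitony (to get $b'\vee a'=a'$), (\ref{equ3}), (\ref{equ1}) and de~Morgan, landing directly on (v). You instead apply (\ref{equ3}) and (\ref{equ1}) to $x,y$ themselves, getting $x=x\odot(x\vee y)=x\odot y=(x\vee y')\wedge y$, which is (vi), and then invoke Remark~\ref{rem3}; this is arguably cleaner, since the complement-juggling is outsourced to the remark. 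For condition (ii), the paper is more economical: having (v), it simply puts $b=1$ to get $a\vee a'=a\vee(1\wedge a')=1$, whereas you derive $0\odot x=0$ from left adjointness, hence $x\wedge x'=0$ via (\ref{equ1}), then $0'=1$ again from adjointness, and finally apply de~Morgan. Your route is longer but establishes the extra facts $x\wedge x'=0$ and $0'=1$ along the way and shows that (ii) does not depend on orthomodularity; the paper's route gets (ii) for free once (v) is known. Both arguments are complete and correct.
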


\begin{proof}
Let $a,b\in A$. Clearly, $(A,\vee,\wedge,0,1)$ is a bounded lattice and $a'=a\rightarrow0$. Using antitony we see that $a\leq b$ implies $b'\leq a'$. Moreover, we have $(a')'=a$ according to the double negation law. Finally, if $a\leq b$ then, using (\ref{equ3}) and (\ref{equ1}), we have
\[
b=(b')'=(b'\odot(b'\vee a'))'=(b'\odot a')'=((b'\vee a)\wedge a')'=a\vee(b\wedge a')
\]
and hence $a\vee a'=a\vee(1\wedge a')=1$.
\end{proof}

Finally, we prove that the correspondence described in the last two theorems is one-to-one.

\begin{theorem}\label{th3}
We have ${\bf L}({\bf A}({\mathcal L}))={\mathcal L}$ for every orthomodular lattice ${\mathcal L}$ and ${\bf A}({\bf L}({\mathcal A}))={\mathcal A}$ for every left residuated l-groupoid satisfying antitony, the double negation law as well as identities {\rm(\ref{equ1})} -- {\rm(\ref{equ3})} of Theorem~\ref{th1}.
\end{theorem}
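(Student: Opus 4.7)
The plan is to verify each of the two equalities by tracking the definitions through the compositions; in both directions the bounded lattice reduct $(L,\vee,\wedge,0,1)$ (respectively $(A,\vee,\wedge,0,1)$) is preserved trivially, since neither $\mathbf{A}$ nor $\mathbf{L}$ alters it. Thus only the non-lattice operations require checking.

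For the first equality $\mathbf{L}(\mathbf{A}(\mathcal{L}))=\mathcal{L}$, the algebra $\mathbf{L}(\mathbf{A}(\mathcal{L}))$ differs from $\mathcal{L}$ only in its unary operation, call it $x^{\ast}$, which is defined as $x\rightarrow 0$ using the $\rightarrow$ of $\mathbf{A}(\mathcal{L})$. By the very first line of the proof of Theorem~\ref{th1}, $x\rightarrow 0 = (0\wedge x)\vee x' = x'$, so $x^{\ast}=x'$ for every $x\in L$, and the two algebras coincide.

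For the second equality $\mathbf{A}(\mathbf{L}(\mathcal{A}))=\mathcal{A}$, write $\tilde\odot$ and $\tilde\rightarrow$ for the two binary operations of $\mathbf{A}(\mathbf{L}(\mathcal{A}))$, which are produced by applying formulas (\ref{equ1}) and (\ref{equ2}) to the lattice reduct and to the unary operation $x'=x\rightarrow 0$ coming from $\mathbf{L}(\mathcal{A})$. Since this very $x'$ is how $\mathcal{A}$'s own unary term $x\rightarrow 0$ is read in the hypotheses (\ref{equ1}) and (\ref{equ2}), the two formulas
\[
x\tilde\odot y=(x\vee y')\wedge y\quad\text{and}\quad x\tilde\rightarrow y=(y\wedge x)\vee x'
\]
are, by assumption on $\mathcal{A}$, exactly $x\odot y$ and $x\rightarrow y$ respectively. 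Hence $\tilde\odot=\odot$ and $\tilde\rightarrow=\rightarrow$, completing the identification.

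There is essentially no obstacle: the whole content of the theorem is that the identities (\ref{equ1}) and (\ref{equ2}) are what makes the correspondence literally invertible, because they force $\odot$ and $\rightarrow$ to be terms in the lattice operations and in $x\mapsto x\rightarrow 0$. The only point worth flagging is that identity (\ref{equ3}) plays no role in this verification — it was needed only in Theorem~\ref{th2} to recover orthomodularity — so I would mention this explicitly to prevent the reader from looking for a use of it here.
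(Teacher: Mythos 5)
Your proof is correct and follows essentially the same route as the paper: both directions reduce to the observations that $x\rightarrow 0=(0\wedge x)\vee x'=x'$ in $\mathbf{A}(\mathcal{L})$ and that the assumed identities (\ref{equ1}) and (\ref{equ2}) make the reconstructed operations coincide literally with $\odot$ and $\rightarrow$. Your closing remark is fair, with the small caveat that (\ref{equ3}) is still implicitly needed here so that $\mathbf{L}(\mathcal{A})$ is an orthomodular lattice (via Theorem~\ref{th2}) and hence $\mathbf{A}$ may legitimately be applied to it.
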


\begin{proof}
If ${\mathcal L}=(L,\vee,\wedge,',0,1)$ is an orthomodular lattice, ${\bf A}({\mathcal L})=(L,\vee,\wedge,\odot,\rightarrow,0,1)$ and ${\bf L}({\bf A}({\mathcal L}))=(L,\vee,\wedge,^*,0,1)$ then
\[
x^*=x\rightarrow0=(0\wedge x)\vee x'=0\vee x'=x'
\]
for all $x\in L$, therefore we obtain ${\bf L}({\bf A}({\mathcal L}))={\mathcal L}$. Conversely, if ${\mathcal A}=(A,\vee,\wedge,\odot,$ $\rightarrow,0,1)$ is a left residuated l-groupoid satisfying divisibility, antitony, the double negation law as well as identities (\ref{equ1}) -- (\ref{equ3}) of Theorem~\ref{th1}, ${\bf L}({\mathcal A})=(A,\vee,\wedge,',0,1)$ and ${\bf A}({\bf L}({\mathcal A}))=(A,\vee,\wedge,\circ,\Rightarrow,0,1)$ then
\begin{eqnarray*}
x\circ y & = & (x\vee y')\wedge y=x\odot y\mbox{ and} \\
x\Rightarrow y & = & (y\wedge x)\vee x'=x\rightarrow y
\end{eqnarray*}
for all $x,y\in A$, therefore we obtain ${\bf A}({\bf L}({\mathcal A}))={\mathcal A}$.
\end{proof}

\begin{remark}\label{rem4}
We have shown that orthomodular lattices can be considered as special residuated lattices and hence the logic of quantum mechanics axiomatized by them has a common base with a certain fuzzy logic axiomatized just by means of residuated lattices as pointed out in {\rm\cite{Bel}}. This sheds a new light on the logic of quantum mechanics and yields new tools for its investigation.
\end{remark}

Authors' addresses:

Ivan Chajda \\
Palack\'y University Olomouc \\
Faculty of Science \\
Department of Algebra and Geometry \\
17.\ listopadu 12 \\
771 46 Olomouc \\
Czech Republic \\
ivan.chajda@upol.cz

Helmut L\"anger \\
 TU Wien \\
Faculty of Mathematics and Geoinformation \\
Institute of Discrete Mathematics and Geometry \\
Wiedner Hauptstra\ss e 8-10 \\
1040 Vienna \\
Austria \\
helmut.laenger@tuwien.ac.at
\end{document}